\documentclass[runningheads,12pt,a4paper]{article}
\usepackage[T1]{fontenc}

\usepackage{graphicx}
\usepackage{amsthm}
\usepackage{amsmath,amssymb}
\usepackage{enumitem}
\usepackage{xcolor}
\newtheorem{theorem}{Theorem}
\newtheorem{definition}{Definition}
\newtheorem{lemma}{Lemma}
\newtheorem*{sunlemma}{Sunflower Lemma}
\newtheorem*{sunconj}{Erd\H{o}s-Rado Sunflower Conjecture}

\newtheorem{corollary}{Corollary}

\newtheorem*{statement}{Statement of Theorem 1}

\newtheorem*{statement1}{Statement of Lemma 1}

\newtheorem*{statement5}{Statement of Lemma 5}
\begin{document}
	\title{Erd\H{o}s Rado Sunflower Theorem for Shifted Families}
	%
	%
	\author{Tapas Kumar Mishra \footnote{Orcid: 0000-0002-9825-3828} \\
		%
		%
		Department of Computer Science and Engineering,\\
		National Institute of Technology, Rourkela
		769008, India\\
		tap1cse@gmail.com, mishrat@nitrkl.ac.in\\
		https://mishra-tapas.github.io/}
	\maketitle              
	\begin{abstract}
		Let $f(k,s)$ denote the minimum integer $m$ such that
		any family $\mathcal{F}$ consisting of $k$-sized sets of cardinality at least $m$ always contain a sunflower of size $s$. The Erd\H{o}s-Rado Sunflower Conjecture states that 
		for every $s >2$, there is an constant $C=C(s)$ such that $f(k,s) \leq C^k$.
		In this paper, we prove the conjecture for shifted families.
		
		keywords: {Erd\H{o}s-Rado Sunflower Conjecture, Sunflower, Sunflower Lemma}
	\end{abstract}
	\section{Introduction}
	Let $n$ be any positive integer and 
	$[n]$ denote the set $\{1,\ldots,n\}$. Let $2^{[n]}$ denote the power set of $[n]$, $\binom{[n]}{k}$ denote the family of all the $k$-sized subsets of $[n]$.
	Let $\mathcal{F}$ be a family of subsets of $[n]$. 
	A sunflower (or $\Delta$ system) of size $s$ with \emph{core} (or \emph{kernel}) $Y$ is a collection $A_1,\ldots,A_s$ such that $A_i \cap A_j=Y$ for each $i\neq j$, the sets $A_i \setminus Y$ are the petals each of which is non-empty.
	Let $f(k,s)$ denote the minimum integer $m$ such that
	any family $\mathcal{F}$ consisting of $k$-sized sets of cardinality more than $m$ always contain a sunflower of size $s$. We have the following bound on $f(k,s)$ due to
	Erd\H{o}s and Rado \cite{erdosrado1960}.
	
	\begin{sunlemma}\label{sunlemma}
		$f(k,s) \leq k!(s-1)^k$.
	\end{sunlemma}
	
	Moreover Erd\H{o}s and Rado \cite{erdosrado1960} also proved a lower bound of $(s-1)^k$ on $f(k,s)$. They also proposed the following conjecture.
	
	\begin{sunconj}\label{sunconj}
		For every $s >2$, there is an constant $C=C(s)$ such that
		$f(k,s) \leq C^k$.
	\end{sunconj}
	This famous conjecture in extremal combinatorics
	was one of Erd\H{o}s’ favorite problems \cite{erdos1981}, for which he offered a \$1000 reward \cite{erdos1990}; it remains open despite
	considerable attention \cite{kostochka2000}.
	In 2019, Alweiss, Lovett, Wu and Zhang \cite{alweiss2020}, using iterative encoding arguments, proved that $f(k,s) 
	\leq (Cs^3\log k \log\log k)^k$ for some constant $C>0$.
	Rao \cite{rao2020} subsequently improved the bound to $f(k,s) \leq (Cs\log (ks))^k$. The current best known Sunflower bound
	due to Bell, Chueluecha, and Warnke \cite{Bell2021} is $(Cs\log k)^k$.

	A matching $\mathcal{M}$ is a collection of pairwise disjoint members of $\mathcal{F}$. The matching number,
	$\nu(\mathcal{F})$ is the cardinality of any maximum matching in $\mathcal{F}$. Note that any matching $\mathcal{M}$ is a sunflower with empty core.
	
	The \emph{sunflower number},
	$\tau(\mathcal{F})$ is the cardinality of any maximum sunflower in $\mathcal{F}$. Note that 
	$\nu(\mathcal{F}) \leq \tau(\mathcal{F})$.
	
	\subsection*{Our results}
	Firstly, we connect the sunflower number $\tau(\mathcal{F})$ and size of the family $\mathcal{F}$, and size of the shadow of the family $\partial \mathcal{F}$ in Section \ref{sec:shadow}: in particular, we derive a lower bound on the size of the shadow. This result is again of independent interest and gives tighter bound than the Kruskal-Katona Theorem provided $\tau(\mathcal{F}) \leq k$, for certain ranges of $k$. We have the following result in this direction.
	\begin{lemma} \label{lemma:4}
		$|\mathcal F| \leq \frac{\tau(\mathcal F)}{k}*|\partial \mathcal F|$.
	\end{lemma}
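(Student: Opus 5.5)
The plan is a double count of incident pairs between $\mathcal F$ and its shadow. Throughout, $\mathcal F\subseteq\binom{[n]}{k}$ and $\partial\mathcal F$ is the family of all $(k-1)$-sets contained in some member of $\mathcal F$. I will count the set
\[
P=\{(A,B): A\in\mathcal F,\ B\in\partial\mathcal F,\ B\subset A\}
\]
in two ways and compare.

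\emph{Counting from $\mathcal F$.} Each $A\in\mathcal F$ has exactly $k$ subsets of size $k-1$, obtained by deleting one element, and every one of them lies in $\partial\mathcal F$ by definition. Hence $|P|=k|\mathcal F|$.

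\emph{Counting from $\partial\mathcal F$.} For $B\in\partial\mathcal F$, let $d(B)$ be the number of $A\in\mathcal F$ with $B\subset A$. Each such $A$ has the form $B\cup\{x_A\}$ for a single element $x_A\notin B$, and distinct sets $A$ give distinct elements $x_A$. So any two of these sets intersect exactly in $B$, and their petals $\{x_A\}$ are nonempty. The $d(B)$ sets therefore form a sunflower with core $B$, and by the definition of the sunflower number we get $d(B)\le\tau(\mathcal F)$. This yields $|P|=\sum_{B\in\partial\mathcal F}d(B)\le\tau(\mathcal F)\,|\partial\mathcal F|$.

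Comparing the two counts gives $k|\mathcal F|\le\tau(\mathcal F)|\partial\mathcal F|$, which is the statement of Lemma~\ref{lemma:4}. There is no real obstacle here. The only point needing care is the degenerate case $d(B)=1$: a single set is trivially a sunflower of size $1$, and $\tau(\mathcal F)\ge 1$ whenever $\mathcal F\neq\emptyset$, so the bound $d(B)\le\tau(\mathcal F)$ still holds. If $\mathcal F=\emptyset$ both sides are zero.
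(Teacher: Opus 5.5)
Your proof is correct and is the same argument as the paper's: you double-count the incident pairs $(B,A)$ with $B\in\partial\mathcal F$, $A\in\mathcal F$, $B\subset A$. This gives exactly $k|\mathcal F|$ on one side and at most $\tau(\mathcal F)\,|\partial\mathcal F|$ on the other, since the members of $\mathcal F$ containing a fixed $(k-1)$-set form a sunflower with that set as core. Your remark on the degenerate cases $d(B)=1$ and $\mathcal F=\emptyset$ is a harmless addition that the paper leaves implicit.
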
 
	Next, we study the Frankl's $(i,j)$ shift/compression
	$\mathcal{C}_{ij}$ and study the variation of sunflower number under shift operations. Let $\mathcal{F}_{1,1}=\mathcal{F}$ and $\mathcal{F}_{i,j}=\mathcal{C}_{ij}(\mathcal{F}_{i,j-1})$, for $1 \leq i< j \leq n$.  Note that $\mathcal{F}_{n-1,n}$ is compressed: for any $F \in \mathcal{F}_{n-1,n}$, if $i \notin F$ and $j \in F$ and $i<j$, then $(F \setminus \{j\}) \cup \{i\} \in \mathcal{F}_{n-1,n}$. It is not hard to see that $\mathcal{F}_{n-1,n}$ is also stable: i.e. $\mathcal{C}_{ij}(\mathcal{F}_{n-1,n})=\mathcal{F}_{n-1,n}$, for $1 \leq i< j \leq n$ (see \cite{frankl1987} for a proof). We define the operator $\mathcal S$ as a sequence of these $\binom{n}{2}$ shifts on some family  $\mathcal{F}$ such that $\mathcal{S}(\mathcal{F})=\mathcal{F}_{n-1,n}$.
	We call a family \emph{shifted} if  $\mathcal{S}(\mathcal{F})=\mathcal{F}$. In Section \ref{sec:shift}, we provide an upper bound for
	the sunflower number of the shifted family, $\tau(\mathcal{S}(\mathcal{F}))$, in terms of the sunflower number of the original family, $\tau(\mathcal{F})$. 

	In Section \ref{sec:ss}, we use properties of shifted families combined with Lemma \ref{lemma:4} to find a large star hinged at elements in the ground set.
	\begin{lemma} \label{lemma:5}
		Let $\mathcal{F} \subseteq \binom{n}{k}$ be shifted and  let $\tau(\mathcal{F}) \leq k$.
		Let $\mathcal{B}_0=\mathcal{F}$.
		Let $\mathcal{A}_i=\{F \in \mathcal{B}_{i-1} : i \in F\}$,
		$\mathcal{B}_i= \{F \in \mathcal{B}_{i-1} : i \notin F\}$, for $1 \leq i \leq n-1$. Then,
		$|\mathcal{A}_i| \geq \frac{|\mathcal{B}_{i-1}|}{2}$.
	\end{lemma}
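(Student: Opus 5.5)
The plan is to prove the equivalent inequality $|\mathcal{A}_i| \geq |\mathcal{B}_i|$. Since $\mathcal{B}_{i-1}$ is the disjoint union of $\mathcal{A}_i$ and $\mathcal{B}_i$, this gives $|\mathcal{B}_{i-1}| = |\mathcal{A}_i| + |\mathcal{B}_i| \leq 2|\mathcal{A}_i|$. The idea is to inject the shadow $\partial \mathcal{B}_i$ into $\mathcal{A}_i$ using shiftedness, and then bound $|\partial\mathcal{B}_i|$ from below by $|\mathcal{B}_i|$ using Lemma \ref{lemma:4}.

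First I record what the families look like. By induction, $\mathcal{B}_{i-1}$ consists exactly of the members of $\mathcal{F}$ that avoid $\{1,\ldots,i-1\}$. Hence $\mathcal{A}_i$ consists of the members of $\mathcal{F}$ that avoid $\{1,\ldots,i-1\}$ and contain $i$, and $\mathcal{B}_i$ consists of the members that avoid $\{1,\ldots,i\}$. If $\mathcal{B}_i=\emptyset$ there is nothing to prove, so assume $\mathcal{B}_i \neq \emptyset$.

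\textbf{Injection.} Take any $G \in \partial \mathcal{B}_i$, say $G = F\setminus\{j\}$ with $F \in \mathcal{B}_i$ and $j \in F$. Then $j > i$ and $i \notin F$. Since $\mathcal{F}$ is shifted (stable under every $\mathcal{C}_{ij}$), the set $(F\setminus\{j\})\cup\{i\} = G \cup \{i\}$ lies in $\mathcal{F}$. This set still avoids $\{1,\ldots,i-1\}$ and contains $i$, so it lies in $\mathcal{A}_i$. The map $G \mapsto G\cup\{i\}$ is injective on $\partial\mathcal{B}_i$, because $i\notin G$ and $G$ is recovered by deleting $i$. Therefore $|\mathcal{A}_i| \geq |\partial \mathcal{B}_i|$.

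\textbf{Shadow bound.} The family $\mathcal{B}_i \subseteq \mathcal{F}$ is $k$-uniform. Any sunflower in $\mathcal{B}_i$ is a sunflower in $\mathcal{F}$, so $\tau(\mathcal{B}_i) \leq \tau(\mathcal{F}) \leq k$. Applying Lemma \ref{lemma:4} to $\mathcal{B}_i$ gives
\[
|\mathcal{B}_i| \leq \frac{\tau(\mathcal{B}_i)}{k}\,|\partial\mathcal{B}_i| \leq |\partial \mathcal{B}_i| \leq |\mathcal{A}_i|,
\]
which is the required inequality.

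The only delicate point is the use of shiftedness. We need that for every $i<j$ with $i\notin F$ and $j\in F\in\mathcal{F}$, the swapped set $(F\setminus\{j\})\cup\{i\}$ lies in $\mathcal{F}$. This is exactly the compressed/stable property the paper attributes to $\mathcal{S}(\mathcal{F})=\mathcal{F}$. Otherwise the argument is bookkeeping, with Lemma \ref{lemma:4} supplying the essential inequality.
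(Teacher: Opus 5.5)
Your proof is correct and is essentially the paper's argument. Both map $\partial\mathcal{B}_i$ injectively into $\mathcal{A}_i$ via $G\mapsto G\cup\{i\}$ using shiftedness, then apply Lemma~\ref{lemma:4} with $\tau(\mathcal{B}_i)\le\tau(\mathcal{F})\le k$ to get $|\mathcal{B}_i|\le|\partial\mathcal{B}_i|\le|\mathcal{A}_i|$; your version is slightly more careful, since you verify $j>i$ and injectivity explicitly and avoid dividing by $\tau(\mathcal{B}_i)$ when $\mathcal{B}_i=\emptyset$.
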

	
	Let $f'(k,s)$ denote the minimum integer $m$ such that
	any \emph{shifted} family $\mathcal{F}$ consisting of $k$-sized sets of cardinality more than $m$ always contain a sunflower of size $s$. We have the following recurrence for $f'(k,s)$.
	
	\begin{theorem}\label{thm:shifted}
		Let $s,k$ be  positive integers, $s \geq 1$, $k \geq 1$. Then, $$f'(k,s) \leq \begin{cases}
			s^{2k}, \text{ if $k\le s-1$,}\\
			2f'(k-1,s), \text{ Otherwise.}
		\end{cases}$$
	\end{theorem}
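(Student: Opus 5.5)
The plan is to argue by induction on $k$, with the two cases of the recurrence handled separately. For the base range $k \le s-1$, I will not use shiftedness at all. The Sunflower Lemma gives $f'(k,s) \le f(k,s) \le k!(s-1)^k$. Since $k \le s-1 < s$, we have $k! \le k^k \le s^k$, and so $k!(s-1)^k \le s^k \cdot s^k = s^{2k}$. This settles the first case.

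For $k \ge s$, take a shifted family $\mathcal{F} \subseteq \binom{[n]}{k}$ with no sunflower of size $s$. Then $\tau(\mathcal{F}) \le s-1 \le k$, so Lemma~\ref{lemma:5} applies. I will use only its first step, $i=1$. Here $\mathcal{B}_0 = \mathcal{F}$, and $\mathcal{A}_1$ is the subfamily of sets containing $1$, so Lemma~\ref{lemma:5} gives $|\mathcal{A}_1| \ge |\mathcal{F}|/2$. Now consider the link $\mathcal{L} = \{F \setminus \{1\} : F \in \mathcal{A}_1\}$. It is a family of $(k-1)$-sets on $\{2,\dots,n\}$ with $|\mathcal{L}| = |\mathcal{A}_1|$.

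Two facts about $\mathcal{L}$ are needed. The first is that any sunflower $L_1,\dots,L_s$ in $\mathcal{L}$ with core $Y$ lifts to the sunflower $L_1\cup\{1\},\dots,L_s\cup\{1\}$ in $\mathcal{F}$, with core $Y \cup \{1\}$ and the same nonempty petals. Hence $\mathcal{L}$ has no sunflower of size $s$. The second is that $\mathcal{L}$ is shifted on the ground set $\{2,\dots,n\}$. To see this, take $L \in \mathcal{L}$ and $2 \le i < j$ with $j \in L$ and $i \notin L$. Then $F = L \cup \{1\}$ lies in $\mathcal{F}$, and since $\mathcal{F}$ is compressed, $(F \setminus\{j\}) \cup \{i\} \in \mathcal{F}$. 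This set still contains $1$, so $(L \setminus \{j\}) \cup \{i\} \in \mathcal{L}$. I will then use the characterization stated in the paper, that compressed families are stable and hence shifted, relabelling the ground set to $[n-1]$.

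With these two facts, the definition of $f'$ gives $|\mathcal{L}| \le f'(k-1,s)$. Therefore $|\mathcal{F}| \le 2|\mathcal{A}_1| = 2|\mathcal{L}| \le 2f'(k-1,s)$. Since this holds for every sunflower-free shifted family, $f'(k,s) \le 2f'(k-1,s)$.

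The main step to check is the application of Lemma~\ref{lemma:5}. Its hypothesis is $\tau(\mathcal{F}) \le k$, and in the case $k \ge s$ this follows from $\tau(\mathcal{F}) \le s-1$. This is exactly why the recurrence splits at $k = s-1$. The other point needing care is the equivalence between ``compressed'' and ``shifted'' ($\mathcal{S}(\mathcal{L}) = \mathcal{L}$) for the link. If that equivalence is doubted, I would check directly that each $\mathcal{C}_{ij}$ fixes a compressed family, so that the composition $\mathcal{S}$ fixes $\mathcal{L}$.
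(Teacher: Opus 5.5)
Your proof is correct and follows the paper's argument for the recursive case: apply Lemma~\ref{lemma:5} at $i=1$, pass to the link of $1$, and lift a sunflower back by adding $1$. For $k\le s-1$ you use the Erd\H{o}s--Rado bound $k!(s-1)^k\le s^{2k}$ directly rather than the paper's separate small-$k$ theorem, and you make explicit two points the paper uses tacitly: that $\tau(\mathcal F)\le s-1\le k$, so Lemma~\ref{lemma:5} applies, and that the link of $1$ is again shifted, which is needed to invoke $f'(k-1,s)$ rather than $f(k-1,s)$.
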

	
	This establishes Conjecture \ref{sunconj} for shifted families.
	
	
	\section{Sunflower Number and Shadows}
	\label{sec:shadow}
	\begin{definition}[Shadow]. Given a $k$-uniform set family $F$, the shadow $\partial F$ is defined as
		
		\[\partial \mathcal F = \{E : E = F \setminus \{x\} \text{ for some } F \in \mathcal F \text{ and } x \in F\}.\]
		
	\end{definition}
	\begin{statement1}
		$|\mathcal F| \leq \frac{\tau(\mathcal F)}{k}*|\partial \mathcal F|$.
	\end{statement1} 
	
	\begin{proof}
		We prove the inequality by double-counting the number of pairs $(E, F)$ such that $F \in \mathcal{F}$, $E \in \partial \mathcal{F}$, and $E \subset F$. Let this quantity be $N$.
		
		First, we calculate $N$ by summing over the sets in $\mathcal{F}$. Since $\mathcal{F}$ is $k$-uniform, every set $F \in \mathcal{F}$ contains exactly $\binom{k}{k-1} = k$ subsets of size $k-1$. By the definition of the shadow, all such subsets belong to $\partial \mathcal{F}$. Thus, each $F$ contributes exactly $k$ to the sum:
		
		$$N = \sum_{F \in \mathcal{F}} k = k \cdot |\mathcal{F}| \quad (*)$$
		
		Next, we calculate $N$ by summing over the sets in the shadow $\partial \mathcal{F}$. For each $E \in \partial \mathcal{F}$, let $d_{\mathcal{F}}(E)$ denote the number of sets in $\mathcal{F}$ that contain $E$ (often called the codegree or extension degree of $E$).$$N = \sum_{E \in \partial \mathcal{F}} d_{\mathcal{F}}(E)$$Consider the family of extensions of a specific set $E \in \partial \mathcal{F}$:$$\mathcal{F}_E = \{F \in \mathcal{F} : E \subset F\}$$Since any distinct $F_i, F_j \in \mathcal{F}_E$ share exactly the set $E$ (as $|F| = k$ and $|E| = k-1$, they can only differ by one element), the family $\mathcal{F}_E$ forms a sunflower with kernel $E$ and petals of size 1. The size of this sunflower is exactly $|\mathcal{F}_E| = d_{\mathcal{F}}(E)$.By definition, $\tau(\mathcal{F})$ is the size of the largest sunflower in $\mathcal{F}$. Therefore, for every $E \in \partial \mathcal{F}$, we must have:$$d_{\mathcal{F}}(E) \leq \tau(\mathcal{F})$$Substituting this bound into our summation for $N$:$$N = \sum_{E \in \partial \mathcal{F}} d_{\mathcal{F}}(E) \leq \sum_{E \in \partial \mathcal{F}} \tau(\mathcal{F}) = \tau(\mathcal{F}) \cdot |\partial \mathcal{F}| \quad (**)$$Combining $(*)$ and $(**)$:$$k \cdot |\mathcal{F}| \leq \tau(\mathcal{F}) \cdot |\partial \mathcal{F}|$$
	\end{proof}
	
	\section{Sunflower Number and 1-Compressed families }
	\label{sec:shift}
	\begin{definition}[$(i,j)$ shift/compression](\cite{frankl1987})
		Let $\mathcal{F} \subseteq 2^{[n]}$. For $1 \leq i,j \leq n$, define $\mathcal{C}_{ij}(\mathcal{F})= \{\mathcal{C}_{ij}(F): F \in \mathcal{F}\}$, where
		\begin{align*}
			\mathcal{C}_{ij}(F) = \begin{cases}
				(F \setminus \{j\})\cup \{i\} , \text{ if  $i \not \in F$, $j \in F$, and $(F \setminus \{j\})\cup \{i\} \not\in \mathcal{F}$},\\
				F, \text{ otherwise.}
			\end{cases}
		\end{align*}
	\end{definition}
	We have two lemmas on the effects of a $(i,j)$ shift on a family $\mathcal{F}$.
	
	\begin{lemma} \label{lemma:1}
		Let $\mathcal{F}$ be a family of subsets of $[n]$ and let $1 \leq i, j \leq n$. Then the following hold:
		\begin{enumerate}[label=(\roman*)]
			\item For any $F \in \mathcal{F}$, $|\mathcal{C}_{ij}(F)| = |F|$.
			\item $|\mathcal{C}_{ij}(\mathcal{F})| = |\mathcal{F}|$.
			\item The matching number does not increase, i.e., $\nu(\mathcal{C}_{ij}(\mathcal{F})) \leq \nu(\mathcal{F})$.
		\end{enumerate}
	\end{lemma}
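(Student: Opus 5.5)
Parts (i) and (ii) are immediate from the definition, and part (iii) is the classical statement that shifting does not increase the matching number; I will prove that one by a direct exchange argument.

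For (i): if $\mathcal{C}_{ij}(F)\neq F$, then $i\notin F$ and $j\in F$. Removing $j$ and adding $i$ therefore preserves cardinality, so $|\mathcal{C}_{ij}(F)|=|F|$.

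For (ii): it suffices to show that $F\mapsto \mathcal{C}_{ij}(F)$ is injective on $\mathcal{F}$. Suppose $F\neq G$ but $\mathcal{C}_{ij}(F)=\mathcal{C}_{ij}(G)$. Then at least one of them moved, say $F$.
\begin{itemize}
\item If both moved, then $F\setminus\{j\}=G\setminus\{j\}$ and both contain $j$, so $F=G$, a contradiction.
\item If only $F$ moved, then $G=(F\setminus\{j\})\cup\{i\}\in\mathcal{F}$. This contradicts the rule that $F$ moves only when its image is not already in $\mathcal{F}$.
\end{itemize}
The degenerate case $i=j$ is trivial, since then $\mathcal{C}_{ii}(F)=F$ for every $F$.

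For (iii): let $\mathcal{M}'=\{\mathcal{C}_{ij}(F_1),\dots,\mathcal{C}_{ij}(F_m)\}$ be a matching in $\mathcal{C}_{ij}(\mathcal{F})$ with distinct $F_t\in\mathcal{F}$. I will construct a matching of size $m$ in $\mathcal{F}$. Since the images are pairwise disjoint, the element $i$ lies in at most one image, say $\mathcal{C}_{ij}(F_1)$ if any. Every other image $G_t=\mathcal{C}_{ij}(F_t)$, $t\ge 2$, avoids $i$, so it did not move, because a moved set contains $i$. Hence $G_t=F_t\in\mathcal{F}$ for $t\ge2$.

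If $F_1$ did not move either, then $\mathcal{M}'\subseteq\mathcal{F}$ and we are done. Otherwise $F_1\ni j$, $i\notin F_1$, and $\mathcal{C}_{ij}(F_1)=(F_1\setminus\{j\})\cup\{i\}$. We split on whether $j$ is used elsewhere.
\begin{itemize}
\item \emph{No $F_t$ with $t\ge2$ contains $j$.} Then $F_1$ is disjoint from every $F_t$, $t\ge 2$: it agrees with its image except at $j$, and $j$ is unused. So $\{F_1,\dots,F_m\}$ is a matching in $\mathcal{F}$.
\item \emph{Some $F_t$, say $F_2$, contains $j$.} This $F_2$ is unique by disjointness, and $i\notin F_2$. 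Since $F_2$ was not moved despite $i\notin F_2$ and $j\in F_2$, the rule forces $F_2'=(F_2\setminus\{j\})\cup\{i\}\in\mathcal{F}$. Replace the pair $F_1,F_2$ by $F_1$ and $F_2'$:
 \begin{itemize}
 \item $F_1$ and $F_2'$ are disjoint, because $F_1\cap F_2=\{j\}$ (as $F_1\setminus\{j\}$ is disjoint from $F_2$), $j\notin F_2'$, and $i\notin F_1$.
 \item $F_2'$ is disjoint from every $F_t$, $t\ge3$, because $i$ was used only by the image of $F_1$, and $F_2\setminus\{j\}$ is already disjoint from $F_t$.
 \item $F_1$ is disjoint from every $F_t$, $t\ge3$, since $j$ lies only in $F_2$.
 \end{itemize}
\end{itemize}
In every case we obtain $m$ pairwise disjoint members of $\mathcal{F}$, so $\nu(\mathcal{C}_{ij}(\mathcal{F}))\le\nu(\mathcal{F})$.

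The main care point is the second case of (iii): recognising that an unmoved set containing $j$ but not $i$ certifies that its shifted copy lies in $\mathcal{F}$, and then checking all the disjointness conditions.
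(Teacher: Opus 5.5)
Your proof is correct and follows the paper's argument essentially step for step. For (ii) you use the same three-case injectivity check. For (iii) you make the same observation that only the image containing $i$ can have moved, then perform the same exchange, replacing the unique unmoved set containing $j$ with its shifted copy $(F_2\setminus\{j\})\cup\{i\}$, which must lie in $\mathcal{F}$. You also treat the degenerate case $i=j$ explicitly, which the paper leaves implicit.
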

	
	\begin{proof}
		The proof of (i) and (ii) are given in \cite{frankl1987}. As (iii) is not used in this paper and is of independent interest, the proof is given in the Appendix \ref{app:1}. 
	\end{proof}
	\begin{lemma} \label{lemma:2}
		Let $\mathcal{F}$ be a family of subsets of $[n]$ and let $1 \leq i, j \leq n$. Then  $\tau(\mathcal{C}_{ij}(\mathcal{F})) \leq 2\tau(\mathcal{F})$.
	\end{lemma}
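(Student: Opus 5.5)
The plan is to take a maximum sunflower in $\mathcal{C}_{ij}(\mathcal{F})$ and pull half of it back to a sunflower in $\mathcal{F}$. If $i=j$ the shift is the identity and there is nothing to prove, so assume $i\neq j$.

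Let $t=\tau(\mathcal{C}_{ij}(\mathcal{F}))$ and let $G_1,\ldots,G_t\in\mathcal{C}_{ij}(\mathcal{F})$ be a sunflower with core $Y$. Split the indices into two classes.
\begin{itemize}
\item $U$ is the set of $l$ with $G_l\in\mathcal{F}$ (the unmoved sets).
\item $M$ is the set of $l$ with $G_l\notin\mathcal{F}$.
\end{itemize}
By the definition of $\mathcal{C}_{ij}$, every $G_l$ with $l\in M$ is the image of a set $F_l\in\mathcal{F}$ that was actually shifted. So $j\in F_l$, $i\notin F_l$, and $G_l=(F_l\setminus\{j\})\cup\{i\}$. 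Equivalently, $i\in G_l$, $j\notin G_l$, and $F_l=(G_l\setminus\{i\})\cup\{j\}$. Since $|U|+|M|=t$, one of the classes has size at least $t/2$. It then suffices to exhibit a sunflower of that size in $\mathcal{F}$.

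\emph{Case $|U|\ge t/2$.} The sets $G_l$ with $l\in U$ all lie in $\mathcal{F}$. A subfamily of a sunflower is again a sunflower with the same core, so $\tau(\mathcal{F})\ge |U|\ge t/2$.

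\emph{Case $|M|\ge t/2$.} If $|M|\le 1$ this is trivial, since $\mathcal{F}\neq\emptyset$ whenever $t\ge 1$. So assume $|M|\ge 2$.
\begin{enumerate}
\item Every $G_l$ with $l\in M$ contains $i$, so $i\in G_l\cap G_{l'}=Y$.
\item No $G_l$ with $l\in M$ contains $j$, so $j\notin Y$.
\item For distinct $l,l'\in M$, replacing $i$ by $j$ in both sets gives
\[F_l\cap F_{l'}=\bigl((G_l\cap G_{l'})\setminus\{i\}\bigr)\cup\{j\}=(Y\setminus\{i\})\cup\{j\}=:Y'.\]
\item The petals satisfy $F_l\setminus Y'=G_l\setminus Y$, so they are nonempty and pairwise disjoint.
\item The $F_l$ are distinct, because the map $G\mapsto (G\setminus\{i\})\cup\{j\}$ is injective on sets containing $i$ and not $j$.
\end{enumerate}
Hence $\{F_l:l\in M\}$ is a sunflower in $\mathcal{F}$ of size $|M|\ge t/2$.

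In both cases $\tau(\mathcal{F})\ge t/2$, which gives $\tau(\mathcal{C}_{ij}(\mathcal{F}))\le 2\tau(\mathcal{F})$.

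The main point to check carefully is the pull-back in the second case. The key observation is that all moved sets contain $i$ and avoid $j$, so $i$ is in the core and $j$ lies outside every petal. Swapping $i$ for $j$ therefore changes every pairwise intersection by exactly the same element, which keeps the structure a sunflower.
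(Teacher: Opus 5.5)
Your proof is correct and follows the paper's argument. The paper likewise splits a maximum sunflower in $\mathcal{C}_{ij}(\mathcal{F})$ into unchanged sets, which already form a sunflower in $\mathcal{F}$, and shifted sets, which are pulled back by swapping $i$ for $j$ (with $i$ in the core) to a sunflower in $\mathcal{F}$. Your pigeonhole on $t/2$ is equivalent to the paper's summing $|\mathcal{A}|+|\mathcal{B}|\le 2\tau(\mathcal{F})$. You also explicitly check nonempty petals, distinctness of the preimages and the case $i=j$, all of which the paper leaves implicit.
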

	
	\begin{proof}
		As Lemma \ref{lemma:2} is not used directly in this paper and is of independent interest, the proof is given in the Appendix \ref{app:2}. 
	\end{proof}

	\subsection*{Compressed/Shifted/Initial families}	
	
	Let $\mathcal{F}_{1,1}=\mathcal{F}$ and $\mathcal{F}_{i,j}=\mathcal{C}_{ij}(\mathcal{F}_{i,j-1})$, for $1 \leq i< j \leq n$.  Note that $\mathcal{F}_{n-1,n}$ is compressed: for any $F \in \mathcal{F}_{n-1,n}$, if $i \notin F$ and $j \in F$ and $i<j$, then $(F \setminus \{j\}) \cup \{i\} \in \mathcal{F}_{n-1,n}$. It is not hard to see that $\mathcal{F}_{n-1,n}$ is also stable: i.e. $\mathcal{C}_{ij}(\mathcal{F}_{n-1,n})=\mathcal{F}_{n-1,n}$, for $1 \leq i< j \leq n$ (see \cite{frankl1987} for a proof). We define the operator $\mathcal S$ as a sequence of these $\binom{n}{2}$ shifts on some family  $\mathcal{F}$ such that $\mathcal{S}(\mathcal{F})=\mathcal{F}_{n-1,n}$.
	We call a family \emph{shifted} if  $\mathcal{S}(\mathcal{F})=\mathcal{F}$.

\section{Sunflower Number, Compressed families and Shadows}\label{sec:ss}

\begin{statement5}
	Let $\mathcal{F} \subseteq \binom{n}{k}$ be shifted and  let $\tau(\mathcal{F}) \leq k$.
	Let $\mathcal{B}_0=\mathcal{F}$.
	Let $\mathcal{A}_i=\{F \in \mathcal{B}_{i-1} : i \in F\}$,
	$\mathcal{B}_i= \{F \in \mathcal{B}_{i-1} : i \notin F\}$, for $1 \leq i \leq n-1$. Then,
	$|\mathcal{A}_i| \geq \frac{|\mathcal{B}_{i-1}|}{2}$.
\end{statement5}

\begin{proof}
	Note that for any  $F \in \mathcal{B}_{i-1}$, $F \cap [i-1]=0$.		
	Further, since $\mathcal{F}$ is shifted,  $\mathcal{B}_{i-1}$ is $i$-compressed (or $i$-stable): for any $F \in \mathcal{B}_{i-1}$, if $i \notin F$ and $j \in F$, then $(F \setminus \{j\}) \cup \{i\} \in \mathcal{B}_{i-1}$.
	From $i$-compression, it follows that 
	$$\{ S \cup \{i\} : S \in \partial \mathcal{B}_{i} \} \subseteq \mathcal{A}_i.$$
	So, $|\mathcal{A}_i| \geq  |\partial \mathcal{B}_{i}|.$
	From Lemma \ref{lemma:4}, $|\partial\mathcal{B}_i|\geq \frac{k}{\tau(\mathcal{B}_i)}|\mathcal{B}_i|$. 
	From the premise of the lemma, we have $\tau(\mathcal{B}_i) \leq \tau(\mathcal{F}) \leq k$.
	Therefore, $|\mathcal{A}_i|\geq |\mathcal{B}_i|$. Since $|\mathcal{B}_{i-1}|=|\mathcal{A}_i|+|\mathcal{B}_{i}|$, the lemma follows.
\end{proof}

\section{Sunflower Theorem for small $k$}
\begin{statement}
	Let $s,k$ be  positive integers,  $k \geq 1$, $s \geq k$. Then, $f(k,s) \leq s^{2k}$.
\end{statement}
\begin{proof}
	We proceed with induction on $k$.
	When $k=1$, the theorem holds. Assume the theorem holds for families, where each set is of size $k-1$. That is, any family of $(k-1)$-sized sets with cardinality more than $s^{2k-2}$ contains a sunflower of size $s$. 
	
	Let $\mathcal{F}$ be a family of $k$-sized sets on $[n]$ with $|\mathcal{F}| > s^{2k}$. We need to prove the existence of a sunflower of size more than $s$ in $\mathcal{F}$. 
	
	\noindent \textbf{Consider a maximal matching:}
	Let $\mathcal{M} = \{A_1, \dots, A_m\}$ be a maximal matching (a collection of pairwise disjoint sets) contained in $\mathcal{F}$.
	If $m > s$, then $\mathcal{M}$ itself is a sunflower of size $s$ with empty kernel ($Y = \emptyset$), and we are done. Assume for contradiction that $m \le s$.
	
	\noindent \textbf{Define the intersection set $U$:}
	Since $\mathcal{M}$ is maximal, every other set in $\mathcal{F}$ must intersect at least one set in $\mathcal{M}$. Let $U$ be the union of all sets in the matching:$$U = \bigcup_{i=1}^m A_i.$$
	The size of $U$ is bounded by:$|U| = m \cdot k \le sk.$
	
	\noindent \textbf{Apply the Pigeonhole Principle:} 
	Every set $S \in \mathcal{F}$ intersects $U$. By the Pigeonhole Principle, there must be at least one element $x \in U$ that is contained in a large number of sets from $\mathcal{F}$. Let $\mathcal{F}_x = \{ S \setminus \{x\} : S \in \mathcal{F}, x \in S \}$ be the ``link'' of $x$. This is a family of sets of size $k-1$. The size of $\mathcal{F}_x$ satisfies:$$|\mathcal{F}_x| \ge \frac{|\mathcal{F}|}{|U|} > \frac{s^{2k}}{ks}= \frac{s^2}{ks} s^{2k-2}\geq s^{2k-2}.$$
	By the induction hypothesis, $\mathcal{F}_x$ contains a sunflower $\mathcal{S} = \{B_1, \dots, B_{s+1}\}$ of size $s+1$ with some kernel $K$.
	
	\noindent \textbf{Construct the Sunflower in $\mathcal{F}$:}
	Now, add the element $x$ back to each set in $\mathcal{S}$. Let $A_i = B_i \cup \{x\}$. The sets $A_1, \dots, A_{s+1}$ are in $\mathcal{F}$. Their intersection is $(K \cup \{x\})$. Since $B_i \setminus K$ were disjoint non-empty leafs, $A_i \setminus (K \cup \{x\})$ are the same disjoint non-empty leafs. Thus, $A_1, \dots, A_{s+1}$ form a sunflower of size $s+1$ in $\mathcal{F}$.
	
\end{proof}

\section{Sunflower Theorem for shifted families}

\begin{statement}
	Let $s,k$ be  positive integers, $s \geq 1$, $k \geq 1$. Then, $$f'(k,s) \leq \begin{cases}
		s^{2k}, \text{ if $k\le s-1$,}\\
		2f'(k-1,s), \text{ Otherwise.}
	\end{cases}$$
\end{statement}

\begin{proof}
	The first half of the recurrence follows from the previous theorem.
	Let $\mathcal{F}$ be a shifted family of $k$-sized sets on $[n]$ with $k\geq s$ and $|\mathcal{F}| > 2f'(k-1,s)$. We need to prove the existence of a sunflower of size more than $s$ in $\mathcal{F}$. From Lemma \ref{lemma:5},
	$\mathcal{A}_1=\{F \in \mathcal{B}_{0}=\mathcal{F} : 1 \in F\}$ has cardinality at least half of $\mathcal{F}$:
	$\mathcal{A}_1 \geq \frac{|\mathcal{F}|}{2}>f'(k-1,s)$.
	Let $\mathcal{G}_1 = \{ F \setminus \{1\} : F \in \mathcal{A}_1\}$ be the ``link'' of $1$. This is a family of sets of size $k-1$. The size of $\mathcal{G}_1$ satisfies:$$|\mathcal{G}_1| =|\mathcal{A}_1| >f'(k-1,s).$$ 
	Therefore, $\mathcal{G}_1$ contains a sunflower $\mathcal{S} = \{B_1, \dots, B_{s}\}$ with some kernel $K$.
	Now, add the element $1$ back to each set in $\mathcal{S}$. Let $A_i = B_i \cup \{1\}$. Thus, $A_1, \dots, A_{s}$ form a sunflower of size $s$ in $\mathcal{F}$.
\end{proof}

We have the following corollary to the Theorem which establishes Conjecture \ref{sunconj} for shifted families.
\begin{corollary}\label{cor:1}
	$f'(k,s) \leq s^{2s-2}2^{k}.$
\end{corollary}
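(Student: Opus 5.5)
The plan is to unroll the recurrence of Theorem \ref{thm:shifted}, using its base case as the starting value. I will fix $s$ and argue by induction on $k$ that $f'(k,s) \leq s^{2s-2}2^{k}$, splitting into the range $k \leq s-1$, where the first branch of the recurrence applies, and the range $k \geq s$, where the second branch applies.

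\textbf{Base range $k \leq s-1$.} Here Theorem \ref{thm:shifted} gives $f'(k,s) \leq s^{2k}$. Since $k \leq s-1$, we have $s^{2k} \leq s^{2(s-1)} = s^{2s-2}$ because $s \geq 1$. Also $2^k \geq 1$, so $s^{2k} \leq s^{2s-2} \leq s^{2s-2}2^k$, and the corollary holds on the whole initial range. (One could even absorb the factor $2^{k}$ more tightly, but this is not needed.) If $s=1$, this range is empty and the argument starts at $k=1$ with the trivial value of $f'(\cdot,1)$; I will treat this degenerate case separately by noting that any nonempty family contains a sunflower of size $1$.

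\textbf{Inductive range $k \geq s$.} Assume $f'(k-1,s) \leq s^{2s-2}2^{k-1}$. The second branch of Theorem \ref{thm:shifted} gives
\[ f'(k,s) \leq 2f'(k-1,s) \leq 2\cdot s^{2s-2}2^{k-1} = s^{2s-2}2^{k}, \]
which closes the induction. Equivalently, one can write $f'(k,s) \leq 2^{k-(s-1)} f'(s-1,s) \leq 2^{k-s+1}s^{2s-2}$, which is slightly stronger than the stated bound.

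\textbf{Main obstacle.} There is essentially none at the level of the corollary: it is a routine unrolling of the recurrence. The only care needed is at the boundary between the two regimes, namely checking that the inductive step at $k=s$ correctly invokes the base value $f'(s-1,s)\le s^{2s-2}$, and handling small $s$ (such as $s=1,2$) where the base range is empty or tiny. In those cases I would verify $f'(1,s)$ directly: a $1$-uniform family of more than $s-1$ distinct singletons already forms a sunflower of size $s$ with empty core. With that check, the induction covers every $k\ge1$, and since $s^{2s-2}$ depends only on $s$, this gives $f'(k,s)\le C^k$ with $C=2s^{2s-2}$.
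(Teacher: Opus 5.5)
Your proposal is correct and follows the paper's approach: the paper gives no separate proof and treats the corollary as a direct consequence of Theorem~\ref{thm:shifted}. That consequence is exactly your unrolling, namely $f'(k,s)\le 2^{k-s+1}f'(s-1,s)\le 2^{k-s+1}s^{2s-2}$ for $k\ge s$, and $s^{2k}\le s^{2s-2}$ for $k\le s-1$. Your separate handling of the degenerate case $s=1$, where the base range is empty and the recurrence has no starting value, is extra care the paper omits.
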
	

%
%

\section{Discussion}

Resolution of the Sunflower Conjecture leads to resolution of various other problems in combinatorics.

\subsubsection*{The Weak (Erdős-Szemerédi) Sunflower Conjecture:}Let $\mathcal{F} \subseteq 2^{[n]}$ be a family of subsets of $[n]$ (of unrestricted, varying sizes). For every integer $k \geq 3$, there exists a constant $\gamma_k < 2$ (or equivalently, $\epsilon_k > 0$ such that $\gamma_k = 2^{1-\epsilon_k}$) such that if $\mathcal{F}$ is $k$-sunflower-free, then:$$|\mathcal{F}| \leq \gamma_k^n = 2^{(1-\epsilon_k)n}.$$
Here, the bounding parameter is $n$, the size of the ground set, rather than the size of the individual sets. 
It is a non-trivial theorem, established by Alon, Shpilka, and Umans \cite{alon2012}, that the Classical Sunflower Conjecture strictly implies the Weak Sunflower Conjecture.

\subsubsection*{Monotone Circuits \cite{rao2023}:}

In monotone circuit complexity, Razborov’s approximation method proves lower bounds for the $k$-CLIQUE problem by replacing exact OR gates with bounded Disjunctive Normal Forms. To prevent exponential blowup, large implicant families are "plucked" down to their cores using the sunflower lemma, introducing a controlled false-positive error. The classical $(k-1)^s s!$ bound severely restricts this process, yielding a suboptimal $n^{\Omega(\sqrt{k})}$ circuit lower bound. If the strong Erdős-Rado Sunflower Conjecture ($|\mathcal{F}| \leq c_k^s$) holds, this factorial bottleneck vanishes. The relaxed constraints allow plucking with drastically reduced errors, instantly pushing the monotone circuit lower bound to its absolute theoretical limit of $n^{\Omega(k)}$.

\subsubsection*{ Algorithmic Matrix Multiplication \cite{rao2023}:}

The sunflower conjecture imposes severe constraints on algebraic approaches to fast matrix multiplication, specifically concerning the matrix multiplication exponent $\omega$.
\begin{itemize}
\item The Coppersmith-Winograd Framework: The group-theoretic approach to bounding $\omega$ (introduced by Cohn and Umans) relies on finding large subsets of groups that avoid specific arithmetic structures, such as uniquely solvable puzzles.
\item Refutation of Abelian Approaches: Alon, Shpilka, and Umans established that if the Erdős-Rado sunflower conjecture is true, it implies a negative answer to the "no three disjoint equivoluminous subsets" question by Coppersmith and Winograd. Consequently, proving the conjecture would mathematically rule out the possibility of proving $\omega = 2$ using abelian group algebras, forcing the pursuit of optimal matrix multiplication algorithms entirely into the domain of non-abelian groups. 
\end{itemize}

\subsubsection*{ Data Structure Lower Bounds and Information Theory \cite{rao2023}:}

In the context of static data structures, the sunflower lemma governs the memory representation of overlapping queries.
\begin{itemize}
\item Memory Access Patterns: If a data structure stores subsets using adaptive cell probing, queries that share a large intersection (forming a sunflower) force the structure to encode distinct subset responses strictly within the shared memory cells of the sunflower's core.

\item Tightening Time-Space Tradeoffs: A fully resolved sunflower conjecture yields optimal lower bounds for the space complexity of answering intersection and minimum-element queries. It proves that any sufficiently large family of queried access sets must structurally overlap in a way that creates a strict, quantifiable bottleneck in memory cell representation.
\end{itemize}
\bibliographystyle{alpha}
\bibliography{SUN}
\appendix
\section{Matching number after a shift}\label{app:1}

\begin{lemma}
	Let $\mathcal{F}$ be a family of subsets of $[n]$ and let $1 \leq i, j \leq n$. Then the following hold:
	\begin{enumerate}[label=(\roman*)]
		\item For any $F \in \mathcal{F}$, $|\mathcal{C}_{ij}(F)| = |F|$.
		\item $|\mathcal{C}_{ij}(\mathcal{F})| = |\mathcal{F}|$.
		\item The matching number does not increase, i.e., $\nu(\mathcal{C}_{ij}(\mathcal{F})) \leq \nu(\mathcal{F})$.
	\end{enumerate}
\end{lemma}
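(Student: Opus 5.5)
We prove the three parts in order, writing $G=(F\setminus\{j\})\cup\{i\}$ for the shifted image of a set $F$.

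\textbf{Parts (i) and (ii).} Part (i) is immediate from the definition. If $\mathcal{C}_{ij}(F)\neq F$, then $i\notin F$ and $j\in F$. Removing $j$ and adding $i$ therefore leaves the cardinality unchanged.

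For (ii) we show that the map $F\mapsto \mathcal{C}_{ij}(F)$ is injective on $\mathcal{F}$. Suppose $F\neq F'$ but $\mathcal{C}_{ij}(F)=\mathcal{C}_{ij}(F')$. At least one of the two sets moved; say $F$ moved, so $\mathcal{C}_{ij}(F)=G$. There are two cases.
\begin{itemize}
\item If $F'$ also moved, then $F'=(G\setminus\{i\})\cup\{j\}=F$, a contradiction.
\item If $F'$ did not move, then $F'=G$. This contradicts the shift condition $G\notin\mathcal{F}$ that allowed $F$ to move.
\end{itemize}

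\textbf{Part (iii).} Let $\mathcal{M}'=\{G_1,\dots,G_t\}$ be a matching in $\mathcal{C}_{ij}(\mathcal{F})$. By (ii), each $G_r$ has a unique preimage $F_r\in\mathcal{F}$. We want to exhibit $t$ pairwise disjoint members of $\mathcal{F}$.

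Only the element $j$ can cause trouble when passing to preimages. The preimages differ from the $G_r$ only at $i$ and $j$, and a moved set loses $i$ and gains $j$ when we undo the move. Since $i$ lies in at most one $G_r$ (disjointness), at most one $G_r$ is a moved image. Hence all but at most one preimage satisfy $F_r=G_r$.

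If no $G_r$ moved, the $F_r$ are the $G_r$ themselves and we are done. Otherwise let $G_1$ be the moved image, so $F_1=(G_1\setminus\{i\})\cup\{j\}$. Then $F_1$ is disjoint from every $F_r=G_r$ with $r\ge 2$, unless some $G_r$ ($r\ge2$) contains $j$.

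\textbf{The main obstacle} is this collision case: $j\in G_2$, and $G_2$ was left unchanged. Since $i\notin G_2$ (because $i\in G_1$), the set $G_2$ was left fixed only because $H=(G_2\setminus\{j\})\cup\{i\}$ already lies in $\mathcal{F}$. We replace the pair $F_1,G_2$ by the pair $F_1,H$ and check that all $t$ sets remain pairwise disjoint.
\begin{itemize}
\item $F_1\cap H=\emptyset$: $F_1$ does not contain $i$, $H$ does not contain $j$, and otherwise the two sets lie inside the disjoint sets $G_1\setminus\{i\}$ and $G_2\setminus\{j\}$.
\item $H$ is disjoint from the other $G_r$, $r\ge3$: neither $i$ nor $j$ lies in those $G_r$, since $i\in G_1$ and $j\in G_2$.
\end{itemize}
This gives a matching of size $t$ in $\mathcal{F}$, so $\nu(\mathcal{F})\ge\nu(\mathcal{C}_{ij}(\mathcal{F}))$.
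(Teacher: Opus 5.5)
Your proposal is correct and follows the paper's proof essentially step for step: the same case split for injectivity in (ii), and in (iii) the same observation that at most one matching edge is a moved image (since it must contain $i$), followed by the same replacement of the unique $j$-containing edge $G_2$ by $(G_2\setminus\{j\})\cup\{i\}$, which lies in $\mathcal{F}$ because $G_2$ failed to move. The only point you leave implicit is that $F_1$ stays disjoint from $G_r$ for $r\ge 3$, which holds because $j$ lies in at most one edge of $\mathcal{M}'$.
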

\begin{proof}
	(i) The operator $\mathcal{C}_{ij}$ either leaves a set $F$ unchanged or replaces one element, $j$, with another, $i$. In both cases, the cardinality of the set is preserved.
	
	(ii) The map $F \mapsto \mathcal{C}_{ij}(F)$ is a bijection from $\mathcal{F}$ to $\mathcal{C}_{ij}(\mathcal{F})$. To see this, we only need to show it is injective. Suppose $\mathcal{C}_{ij}(F_1) = \mathcal{C}_{ij}(F_2) = G$. If neither set was shifted, $F_1 = F_2 = G$. If both were shifted, then $F_1 = (G \setminus \{i\}) \cup \{j\}$ and $F_2 = (G \setminus \{i\}) \cup \{j\}$, which implies $F_1 = F_2$. A case where one is shifted and the other is not (e.g., $G = (F_1 \setminus \{j\}) \cup \{i\}$ and $G=F_2$) leads to a contradiction with the definition of the shift, as the existence of $F_2 \in \mathcal{F}$ would have prevented $F_1$ from being shifted to $G$. Thus, the map is a bijection and $|\mathcal{C}_{ij}(\mathcal{F})| = |\mathcal{F}|$.
	
	Let $\mathcal{F}' = \mathcal{C}_{ij}(\mathcal{F})$. To show that $\nu(\mathcal{F}') \leq \nu(\mathcal{F})$, we will take a maximum matching in $\mathcal{F}'$ and construct a matching of the same size in $\mathcal{F}$.
	Let $\mathcal{M}' = \{B_1, B_2, \dots, B_p\}$ be a maximum matching in $\mathcal{F}'$, so $p = \nu(\mathcal{F}')$. The sets $B_k$ are pairwise disjoint. For each $B_k \in \mathcal{M}'$, there is a unique preimage $A_k \in \mathcal{F}$ such that $B_k = \mathcal{C}_{ij}(A_k)$.	
	A set $B_k$ is altered by the shift (i.e., $B_k \neq A_k$) only if $B_k = (A_k \setminus \{j\}) \cup \{i\}$. This implies that $i \in B_k$. Since the sets in the matching $\mathcal{M}'$ are pairwise disjoint, at most one set in $\mathcal{M}'$ can contain the element $i$. Therefore, \textbf{at most one set in $\mathcal{M}'$ could have been altered} by the $\mathcal{C}_{ij}$ operation.	
	We consider two cases:
	
	\textbf{Case 1: No set in $\mathcal{M}'$ was altered.}
	In this case, $B_k = A_k$ for all $k=1, \dots, p$. This means every set in $\mathcal{M}'$ is also in $\mathcal{F}$. Thus, $\mathcal{M}'$ itself is a matching of size $p$ in $\mathcal{F}$, so $\nu(\mathcal{F}) \ge p$.
	
	\textbf{Case 2: Exactly one set in $\mathcal{M}'$ was altered.}
	Without loss of generality, let $B_1$ be the altered set. Then:
	\begin{itemize}
		\item $B_1 = (A_1 \setminus \{j\}) \cup \{i\}$, where $A_1 \in \mathcal{F}$, $i \notin A_1$, $j \in A_1$, and $B_1 \notin \mathcal{F}$.
		\item For all $k \geq 2$, $B_k = A_k \in \mathcal{F}$, and since $B_1 \cap B_k = \emptyset$, we must have $i \notin B_k$.
	\end{itemize}
	
	Now, we construct a matching $\mathcal{M}$ in $\mathcal{F}$. Consider the collection $\{A_1, B_2,\allowbreak \dots, B_p\}$.
	If this collection is a matching, we are done. This occurs if $j \notin B_k$ for all $k \geq 2$, as $A_1$ is then disjoint from all $B_k$.
	Suppose the collection is not a matching. This can only happen if $A_1$ intersects some $B_k$ for $k \geq 2$. Since $A_1 = (B_1 \setminus \{i\}) \cup \{j\}$ and $B_1$ is disjoint from all other $B_k$, the intersection must be the element $j$. As the sets $\{B_k\}_{k \geq 2}$ are disjoint, at most one of them can contain $j$. Let this be $B_2$, so $j \in B_2$.
	Now we have $j \in B_2$ and $i \notin B_2$. Since $B_2$ was not altered by the shift (i.e., $\mathcal{C}_{ij}(B_2) = B_2$), the condition for shifting must have failed. This implies that the set $C_2 = (B_2 \setminus \{j\}) \cup \{i\}$ must already be in $\mathcal{F}$.
	
	Let us define a new collection of sets $\mathcal{M} = \{A_1, C_2, B_3, \dots, B_p\}$. We claim this is a matching of size $p$ in $\mathcal{F}$.
	\begin{enumerate}
		\item \textbf{Membership in $\mathcal{F}$}: $A_1 \in \mathcal{F}$ by definition. $C_2 \in \mathcal{F}$ as argued above. $B_k \in \mathcal{F}$ for $k \geq 3$ by assumption.
		\item \textbf{Pairwise Disjointness}:
		\begin{itemize}
			\item The sets $\{B_3, \dots, B_p\}$ are pairwise disjoint.
			\item For $k \geq 3$, $A_1 \cap B_k = \emptyset$ because $B_1 \cap B_k = \emptyset$ and $B_k$ contains neither $i$ nor $j$.
			\item For $k \geq 3$, $C_2 \cap B_k = \emptyset$ because $B_2 \cap B_k = \emptyset$ and $B_k$ contains neither $i$ nor $j$.
			\item Crucially, $A_1 \cap C_2 = \emptyset$. We know $A_1 = (B_1 \setminus \{i\}) \cup \{j\}$ and $C_2 = (B_2 \setminus \{j\}) \cup \{i\}$. Since $B_1$ and $B_2$ are disjoint, any intersection between $A_1$ and $C_2$ must involve $i$ or $j$. However, $i \notin A_1$ and $j \notin C_2$. Therefore, they are disjoint.
		\end{itemize}
	\end{enumerate}
	We have successfully constructed a matching $\mathcal{M}$ of size $p$ in $\mathcal{F}$.
	
	In all cases, the existence of a matching of size $p$ in $\mathcal{F}'$ implies the existence of a matching of size at least $p$ in $\mathcal{F}$. Therefore, $\nu(\mathcal{F}) \ge \nu(\mathcal{F}')$. 
\end{proof}

\section{Sunflower number  after a shift}\label{app:2}
\begin{lemma}
	Let $\mathcal{F}$ be a family of subsets of $[n]$ and let $1 \leq i, j \leq n$. Then  $\tau(\mathcal{C}_{ij}(\mathcal{F})) \leq 2\tau(\mathcal{F})$.
\end{lemma}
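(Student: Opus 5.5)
The plan is to take a maximum sunflower in $\mathcal{F}' = \mathcal{C}_{ij}(\mathcal{F})$ and split it into two parts: the members that were not moved by the shift, and the members that were moved. We then show that each part, after pulling it back to $\mathcal{F}$, is a sunflower in $\mathcal{F}$. That gives $p \le 2\tau(\mathcal{F})$ at once.

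Let $B_1,\dots,B_p$ be a maximum sunflower in $\mathcal{F}'$ with core $Y$, so $p=\tau(\mathcal{F}')$. By Lemma \ref{lemma:1}(ii) each $B_k$ has a unique preimage $A_k\in\mathcal{F}$. Put $U=\{B_k : B_k=A_k\}$ and $V=\{B_k : B_k\neq A_k\}$.

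\textbf{The unaltered part.} Every member of $U$ lies in $\mathcal{F}$. A subfamily of a sunflower is again a sunflower: the pairwise intersections are still $Y$ and the petals are still nonempty and disjoint. Hence $|U|\le \tau(\mathcal{F})$.

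\textbf{The altered part.} Each $B_k\in V$ has the form $(A_k\setminus\{j\})\cup\{i\}$ with $i\notin A_k$ and $j\in A_k$. Therefore $i\in B_k$ and $j\notin B_k$.

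If $|V|\le 1$, then $|V|\le \tau(\mathcal{F})$ trivially, since $\mathcal{F}$ is nonempty whenever $V$ is.

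If $|V|\ge 2$, then $i$ lies in two distinct members of the sunflower, so $i\in Y$. For distinct $B_k,B_l\in V$ we have
$$A_k\cap A_l=\bigl((B_k\cap B_l)\setminus\{i\}\bigr)\cup\{j\}=(Y\setminus\{i\})\cup\{j\}.$$
This uses $j\notin B_k$ and $j\notin B_l$. The petals are $A_k\setminus\bigl((Y\setminus\{i\})\cup\{j\}\bigr)=B_k\setminus Y$, exactly the old petals. So they are nonempty and pairwise disjoint. The $A_k$ are distinct by injectivity of the shift map. Thus $\{A_k : B_k\in V\}$ is a sunflower in $\mathcal{F}$, and $|V|\le\tau(\mathcal{F})$.

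\textbf{Conclusion.} Combining the two parts gives $p=|U|+|V|\le 2\tau(\mathcal{F})$.

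The only delicate point is the computation of $A_k\cap A_l$ in the altered part. It relies on two facts: $i$ belongs to the core once two altered sets are present, and $j$ is absent from every altered set. Both follow directly from the definition of $\mathcal{C}_{ij}$. No interaction between $U$ and $V$ needs to be controlled, which is why the bound carries the factor $2$ rather than being sharp.
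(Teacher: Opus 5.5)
Your proof is correct and is essentially the paper's argument. You split a maximum sunflower of $\mathcal{C}_{ij}(\mathcal{F})$ into the unshifted members, which form a sub-sunflower of $\mathcal{F}$, and the shifted members, whose preimages $(B_k\setminus\{i\})\cup\{j\}$ form a sunflower with core $(Y\setminus\{i\})\cup\{j\}$ once $i\in Y$. You also check explicitly, using $j\notin B_k$, that the pulled-back petals equal $B_k\setminus Y$ and so are nonempty, a point the paper leaves implicit.
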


\begin{proof}
	Let $\mathcal{G} = \mathcal{C}_{ij}(\mathcal{F})$ be the shifted family. Let $\tau(\mathcal{G})$ denote the size of the maximum sunflower in the shifted family. Let $\mathcal{S} = \{S_1, S_2, \dots, S_m\} \subseteq \mathcal{G} $ be a sunflower of size $m = \tau(\mathcal{G} )$. Let $K$ be the kernel of the sunflower $\mathcal{S}$, i.e., $S_p \cap S_q = K$ for all $p \neq q$.
	We partition the sunflower $\mathcal{S}$ into two sub-families based on whether the sets were ``shifted'' or remained ``unchanged'' from the original family $\mathcal{F}$.
	
	{\noindent \bf Step 1: Partitioning the Sunflower}
	
	Recall the definition of the shift $\mathcal{C}_{ij}(F)$. A set $S_r \in \mathcal{G} $ is formed in one of two ways:
	\begin{itemize}
		\item Shifted: $S_r = (F_r \setminus \{j\}) \cup \{i\}$ where $F_r \in \mathcal{F}$, $j \in F_r, i \notin F_r$, and the potential shift was not already in $\mathcal{F}$. In this case, $i \in S_r$ and $j \notin S_r$.
		\item Unchanged: $S_r = F_r$ where $F_r \in \mathcal{F}$.
	\end{itemize}
	
	We define two index sets:$$I_{shifted} = \{ r \in [m] : S_r \text{ was shifted from some } F_r \in \mathcal{F} \}$$$$I_{unchanged} = \{ r \in [m] : S_r \text{ was not shifted, i.e., } S_r \in \mathcal{F} \}$$
	
	Let $\mathcal{A} = \{ S_r : r \in I_{shifted} \}$ and $\mathcal{B} = \{ S_r : r \in I_{unchanged} \}$.
	Clearly, $|\mathcal{S}| = |\mathcal{A}| + |\mathcal{B}|$.
	
	{\noindent \bf Step 2: Analyzing the Unchanged Part ($\mathcal{B}$)}
	
	The family $\mathcal{B}$ is a subset of the original sunflower $\mathcal{S}'$. Therefore, $\mathcal{B}$ itself is a sunflower (any subset of a sunflower is a sunflower). Furthermore, by definition of $I_{unchanged}$, every set in $\mathcal{B}$ belongs to the original family $\mathcal{F}$. Thus, $\mathcal{B}$ is a sunflower contained in $\mathcal{F}$. By the definition of the sunflower number $\tau(\mathcal{F})$, the size of this sunflower cannot exceed the maximum:$$|\mathcal{B}| \leq \tau(\mathcal{F})$$
	
	{\noindent \bf  Step 3: Analyzing the Shifted Part ($\mathcal{A}$)} 
	
	If $\mathcal{A}$ is empty or contains only 1 set, then $|\mathcal{A}| \le 1 \le \tau(\mathcal{F})$ (assuming the family is non-trivial), and we are done. Assume $|\mathcal{A}| \ge 2$.For every $S_r \in \mathcal{A}$, we know by the definition of the shift that $i \in S_r$ and $j \notin S_r$. Since every set in $\mathcal{A}$ contains $i$, the kernel of $\mathcal{A}$ (which is the intersection of all sets in $\mathcal{A}$) must contain $i$. Let $K_{\mathcal{A}}$ be the kernel of the sunflower $\mathcal{A}$. We have $i \in K_{\mathcal{A}}$ and $j \notin K_{\mathcal{A}}$. Now consider the pre-images of these sets in $\mathcal{F}$. For each $S_r \in \mathcal{A}$, let $F_r$ be the original set in $\mathcal{F}$. Since $S_r$ was shifted, we have the relationship:$$S_r = (F_r \setminus \{j\}) \cup \{i\}$$
	Inverting this, the original set is:$$F_r = (S_r \setminus \{i\}) \cup \{j\}$$
	Let $\mathcal{F}_{\mathcal{A}} = \{ F_r : r \in I_{shifted} \} \subseteq \mathcal{F}$. We claim that $\mathcal{F}_{\mathcal{A}}$ forms a sunflower in $\mathcal{F}$. Take any two distinct sets $F_p, F_q \in \mathcal{F}_{\mathcal{A}}$. Their intersection is:\begin{align*}F_p \cap F_q &= \left( (S_p \setminus {i}) \cup {j} \right) \cap \left( (S_q \setminus {i}) \cup {j} \right) &= \left( (S_p \cap S_q) \setminus {i} \right) \cup {j}
	\end{align*}
	Since $S_p$ and $S_q$ are from the sunflower $\mathcal{A}$, their intersection is $K_{\mathcal{A}}$.
	Thus, $F_p \cap F_q = (K_{\mathcal{A}} \setminus {i}) \cup {j}$. This intersection depends only on $K_{\mathcal{A}}$, $i$, and $j$, and is identical for all pairs in $\mathcal{F}_{\mathcal{A}}$. Therefore, $\mathcal{F}_{\mathcal{A}}$ is a sunflower in $\mathcal{F}$. The size of this sunflower is $|\mathcal{F}_{\mathcal{A}}| = |\mathcal{A}|$. By the definition of $\tau(\mathcal{F})$:$$|\mathcal{A}| \leq \tau(\mathcal{F})$$
	
	Combining the bounds from Step 2 and Step 3:\[\tau(\mathcal{G}) = m = |\mathcal{A}| + |\mathcal{B}| \leq \tau(\mathcal{F}) + \tau(\mathcal{F})= 2\tau(\mathcal{F}).\]
\end{proof}

\end{document}